\numberwithin{equation}{section}
\newtheorem{prop}{Proposition}[section]
\newtheorem{thm}[prop]{Theorem}
\newtheorem{cor}[prop]{Corollary}
\newtheorem{lem}[prop]{Lemma}
\theoremstyle{definition}
\newtheorem{defn}[prop]{Definition}
\newtheorem{assump}[prop]{Assumption}
\newtheorem*{claim*}{Claim}
\theoremstyle{remark}
\newtheorem{expl}[prop]{Example}
\newtheorem{rem}[prop]{Remark}
\newcommand{\bR}{\mathbb{R}}
\newcommand{\bQ}{\mathbb{Q}}
\newcommand{\bN}{\mathbb{N}}
\newcommand{\bk}{\mathbbm{k}}
\newcommand{\cO}{\mathcal{O}}
\newcommand{\cI}{\mathcal{I}}
\newcommand{\cF}{\mathcal{F}}
\newcommand{\fa}{\mathfrak{a}}
\DeclareMathOperator{\lct}{lct}
\DeclareMathOperator{\vol}{vol}
\DeclareMathOperator{\ord}{ord}
\DeclareMathOperator{\Gr}{Gr}
\DeclareMathOperator{\Ex}{Ex}
\renewcommand{\div}{\operatorname{div}}
\DeclareMathOperator{\Sym}{Sym}
\DeclareMathOperator{\Proj}{Proj}
\DeclareMathOperator{\Bs}{Bs}
\renewcommand{\BibLabel}{%
    \Hy@raisedlink{\hyper@anchorstart{cite.\CurrentBib}\hyper@anchorend}%
    [\thebib]%
}
\setlist[enumerate,1]{label={\rm(\arabic*)}, ref={\rm\arabic*}} 
\newcommand{\lra}{\longrightarrow}
\title{K-stability for varieties with a big anticanonical class}
\author{Chenyang Xu}
\address{Department of Mathematics, Princeton University, Princeton, NJ 08544, USA}
\email{chenyang@princeton.edu}
\begin{document}



\maketitle

\begin{prelims}

\DisplayAbstractInEnglish

\bigskip

\DisplayKeyWords

\medskip

\DisplayMSCclass







\end{prelims}


\newpage

\setcounter{tocdepth}{1}

\tableofcontents


\section{Introduction}
There has been tremendous progress in algebraic K-stability theory of log Fano pairs (see \cite{Xu-survey} for a survey of the topic).
In the recent works \cite{DZ-bigness} and \cite{DR-bigness}, the K\"ahler--Einstein problem is considered for a K\"ahler manifold $(X,\omega)$ such that $-K_X$ is big. More precisely, in \cite{DZ-bigness} the authors prove a transcendental Yau--Tian--Donaldson theorem for twisted big K\"ahler--Einstein metrics. As a consequence of their result, in the algebraic setting,  uniform K-stability of $X$ with a big anticanonical class implies the existence of a K\"ahler--Einstein metric. In this note we want to show that the K-stability theory in this setting, \textit{i.e.} a projective klt pair with a big anticanonical class, essentially follows from the original (log) Fano case. 

In general,  there could be pathological examples in  projective varieties $X$ with a big anticanonical class $-K_X$; \textit{e.g.} the anticanonical ring $R(X,-K_X)=\bigoplus_{m\in \bN}H^0(X,-mK_X)$ is not necessarily finitely generated (see Example~\ref{example-non finitely generated}). However, we will show that the K-stability condition implies that $X$ is of log Fano type.

\begin{thm}\label{t-main}
Let $(X,\Delta)$ be a klt projective pair with $-K_X-\Delta$ big. Assume $\delta(X,\Delta)\ge 1$. Then there exists an effective $\bQ$-divisor $\Gamma$ such that $(X,\Delta+\Gamma)$ is a log Fano pair, \textit{i.e.}\ $(X,\Delta+\Gamma)$ is klt and $-K_X-\Delta-\Gamma$ is ample. In particular, 
$$R(X,-r(K_X+\Delta))=\bigoplus_{m\in r\cdot\bN}H^0(X,-m(K_X+\Delta))$$
is finitely generated for any $r$ such that $r(K_X+\Delta)$ is Cartier.
\end{thm}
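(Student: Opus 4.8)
The plan is to use the fact that $\delta(X,\Delta)\ge 1$ together with bigness of $-K_X-\Delta$ to produce, via the basis-type divisor/valuative description of $\delta$, an effective $\bQ$-divisor that witnesses log Fano type. First I would recall that since $-K_X-\Delta$ is big, for $m$ sufficiently divisible $|-m(K_X+\Delta)|$ defines a birational map, and I can write $-m(K_X+\Delta) \sim_{\bQ} A_m + E_m$ with $A_m$ ample and $E_m\ge 0$ (Kodaira's lemma). The issue is that $(X,\Delta)$ need not be log Fano type a priori — $-K_X-\Delta$ is only big, not ample, and the section ring need not be finitely generated. The key observation to exploit is that $\delta(X,\Delta)\ge 1$ is an extremely strong constraint: it controls $A_X(E) - S(E)$ for every divisor $E$ over $X$, where $S$ is computed against the (possibly non-finitely-generated) graded linear series $R(X,-K_X-\Delta)$.

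The main steps, in order, are as follows. (1) Using bigness, pass to a log resolution and run an appropriate MMP to reach the \emph{anticanonical model} — more precisely, I expect the argument to show that $\delta\ge 1$ forces the anticanonical ring $R(X,-r(K_X+\Delta))$ to be finitely generated by identifying its $\Proj$ with a log Fano variety. The natural route: take a sufficiently general $\Gamma_0 \sim_{\bQ} \tfrac{1}{1-\epsilon}\cdot(-(K_X+\Delta))$ times a small coefficient, so that $-K_X-\Delta-\Gamma_0$ is still big but we have made $(X,\Delta+\Gamma_0)$ closer to the boundary, and control the singularities by the $\delta\ge 1$ hypothesis. (2) Apply the valuative criterion: for any exceptional divisor $E$ extracted in the non-ample locus, $A_{(X,\Delta)}(E) \ge S_{-K_X-\Delta}(E) > 0$, and relate this to discrepancies after running MMP. (3) Conclude that the MMP contracts exactly the locus where $-K_X-\Delta$ fails to be ample, landing on a klt pair $(X',\Delta')$ with $-K_{X'}-\Delta'$ ample, i.e.\ a log Fano pair, with $R(X,-r(K_X+\Delta)) \cong R(X',-r(K_{X'}+\Delta'))$ — hence finitely generated by the base-point-free theorem.

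For the final ``in particular'' statement, once I have an effective $\Gamma$ with $(X,\Delta+\Gamma)$ log Fano, the variety $X$ is of Fano type, hence a Mori dream space, so every section ring $R(X,L)$ for $L$ a $\bQ$-Cartier divisor is finitely generated; applying this to $L = -r(K_X+\Delta)$ gives the claim. Alternatively one notes directly that the anticanonical model of $(X,\Delta)$ coincides with the log Fano pair obtained above, so $\Proj R(X,-r(K_X+\Delta))$ is projective and the ring is finitely generated.

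I expect the \textbf{main obstacle} to be step~(1)–(2): making rigorous the passage from the valuative inequality $A_{(X,\Delta)}(E)\ge S(E)$ — where $S$ is defined via a graded linear series that is \emph{not} known to be finitely generated — to an actual MMP-theoretic statement. The subtlety is that $S(E)$ is an asymptotic invariant, so one must argue that the $\delta\ge 1$ bound on \emph{all} valuations, combined with bigness, already implies the restricted base locus behaves well enough that running the MMP on $-(K_X+\Delta)$ terminates with the anticanonical model being klt and the anticanonical class becoming semiample (then one perturbs to get ample plus effective $\Gamma$). Handling the boundary case $\delta = 1$ (as opposed to $\delta > 1$) and ensuring klt-ness is preserved throughout — rather than merely log canonical — is where I anticipate the delicate bookkeeping.
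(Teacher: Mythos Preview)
Your proposal has a genuine gap, and it is precisely the one you flag at the end without resolving. You want to run an MMP on $-(K_X+\Delta)$ and argue it terminates at the anticanonical model, but termination of such an MMP (or semiampleness of $-(K_X+\Delta)$) is essentially equivalent to finite generation of the anticanonical ring, which is what you are trying to prove. The valuative inequality $A_{X,\Delta}(E)\ge S(E)$ gives you information about singularities of basis-type divisors, not directly about the structure of the restricted base locus or about MMP steps; there is no mechanism in your outline that converts this into ``the MMP contracts exactly the non-ample locus and terminates.'' So steps (1)--(3) as written are circular.

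The paper's route avoids MMP entirely for the main construction. The key idea is that $\delta_m(X,\Delta)=\inf_D\lct(X,\Delta;D)$ over $m$-basis type divisors and $\delta_m\to\delta$; so if $\delta>1$ one immediately gets an effective $D\sim_{\bQ}-(K_X+\Delta)$ with $(X,\Delta+D)$ klt, and log Fano type follows from Kodaira's lemma (this is Lemma~\ref{lem-multiplier trivial}). For the boundary case $\delta\ge 1$ one perturbs: choose an ample $A$ with $-K_X-\Delta-A$ pseudoeffective, observe (Lemma~\ref{lem-1/n+1}) that $S(A)\ge\tfrac{1}{n+1}$, take a general $H\in|m_0A|$, and pick an $m$-basis type divisor $D_m$ \emph{compatible with $H$}, so $D_m=F_m+b_mH$ with $b_m\to S(H)\ge\tfrac{1}{m_0(n+1)}$. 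Then for $\delta'$ slightly below $\min\{\delta_m,1\}$ the pair $(X,\Delta+\delta'F_m)$ is klt and $-K_X-\Delta-\delta'F_m$ is ample by a direct linear-equivalence computation. Only after this does one invoke \cite{BCHM} to get finite generation. The missing idea in your plan is exactly this: build the complement $\Gamma$ directly out of a basis-type divisor, rather than trying to reach the anticanonical model first.
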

Here $\delta(X,\Delta)$ is defined in the exactly same fashion as in the case when $-K_X-\Delta$ is ample (see \cites{FO-basistype,BJ-delta}), \textit{i.e.}\
\[
\delta(X,\Delta)=\inf_E\frac{A_{X,\Delta}(E)}{S_{X,\Delta}(E)}.
\]
For the stronger and more precise statement, see Theorem~\ref{thm-log fano pair}. We note that the above finite generation is asked in \cite{DR-bigness}.

The above observation makes it possible to use existing birational geometry techniques to study K-stability questions for $X$ with a big anticanonical class. In fact, without too much difficulty, it reduces K-stability questions for $(X,\Delta)$ to K-stability questions for its anticanonical model $(Z,\Delta_Z)$, as we can see from the following statement.

\begin{thm}\label{t-maintheorem}
Let $(X,\Delta)$ be a klt projective pair with $-K_X-\Delta$  big. Assume $R=\bigoplus_{m\in r\cdot \bN} H^0(-m(K_X+\Delta))$ is finitely generated, and denote by $(Z,\Delta_Z)$
the anticanonical model. Then 
$(X,\Delta)$ is K-semistable $($resp.\ K-stable, uniformly K-stable$)$ if and only $(Z,\Delta_Z)$ is  K-semistable $($resp.\ K-stable, uniformly K-stable$)$.   In particular, uniform K-stability of\, $(X,\Delta)$ is the same as K-stability of\, $(X,\Delta)$.
\end{thm}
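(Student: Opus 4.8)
The plan is to pass everything to valuations over a common birational model and exploit that $X$ and its anticanonical model $Z$ share the very same graded ring $R$. Fix a common log resolution $p\colon W\to X$, $q\colon W\to Z$ of the birational contraction $\phi\colon X\dashrightarrow Z$ and write
\[
p^*\bigl(-(K_X+\Delta)\bigr)=q^*\bigl(-(K_Z+\Delta_Z)\bigr)+F,\qquad F\ge 0 .
\]
The first step is to observe that $F$ is $q$-exceptional: since $R(X,-(K_X+\Delta))=R(Z,-(K_Z+\Delta_Z))=R$ we get $\vol(-(K_X+\Delta))=\vol(-(K_Z+\Delta_Z))$, while a non-$q$-exceptional component of $F$ would force $h^0(W,q^*(mA_Z)+mF)>h^0(Z,mA_Z)$ for $m\gg0$ (with $A_Z:=-(K_Z+\Delta_Z)$), contradicting $h^0(W,mp^*L)=h^0(X,mL)=h^0(Z,mA_Z)$. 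The second, central step is the pair of identities, valid for every divisorial valuation $v$ over $X$ (equivalently over $Z$):
\[
A_{X,\Delta}(v)=A_{Z,\Delta_Z}(v)+v(F),\qquad S_{X,\Delta}(v)=S_{Z,\Delta_Z}(v)+v(F).
\]
The first is just the definition of log discrepancy applied to the displayed relation between the pullbacks of $K_X+\Delta$ and $K_Z+\Delta_Z$. For the second, under the identification $H^0(X,-m(K_X+\Delta))=R_m=H^0(Z,-m(K_Z+\Delta_Z))$ a section on $X$ equals the corresponding section on $Z$ times the canonical section of $\mathcal O_W(mF)$, so the $v$-filtration of $R_m$ computed on $X$ is that computed on $Z$ shifted by $m\,v(F)$; integrating the shifted volume function (which equals $\vol(A_Z)$ for $t<0$) against $\vol(A_Z)$ produces exactly the extra $v(F)$.

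Subtracting the two identities gives $\beta_{X,\Delta}(v):=A_{X,\Delta}(v)-S_{X,\Delta}(v)=A_{Z,\Delta_Z}(v)-S_{Z,\Delta_Z}(v)$ for all $v$; in particular $A_{X,\Delta}(v)/S_{X,\Delta}(v)\ge 1\iff A_{Z,\Delta_Z}(v)/S_{Z,\Delta_Z}(v)\ge 1$. Hence $\delta(X,\Delta)\ge 1\iff\delta(Z,\Delta_Z)\ge 1$, and by the valuative criterion for K-semistability of log Fano pairs \cites{FO-basistype,BJ-delta} (and its counterpart for $(X,\Delta)$), $(X,\Delta)$ is K-semistable iff $(Z,\Delta_Z)$ is. For the strict versions I would use in addition a Seshadri–Izumi estimate $v(F)\le C\cdot S_{Z,\Delta_Z}(v)$ with $C$ independent of $v$: since $v(F)>0$ forces the center of $v$ on $Z$ into the fixed proper closed set $q(\operatorname{Supp}F)$, $v(F)$ is comparable to $v(\mathfrak m_{c_Z(v)})$, which is controlled above in terms of $A_{Z,\Delta_Z}(v)$ (Izumi) and below in terms of $S_{Z,\Delta_Z}(v)$ (via the Seshadri constant of $-(K_Z+\Delta_Z)$), uniformly over the compact center locus. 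Combined with the shift identities this yields $\delta(X,\Delta)>1\iff\delta(Z,\Delta_Z)>1$, hence the equivalence of uniform K-stability, and the case of K-stability follows identically by reading $\beta$-positivity over all nontrivial $v$ on both sides. Equivalently one can argue through the non-Archimedean $\DNA$ and $\JNA$ functionals: a test configuration of $(X,-(K_X+\Delta))$ and the matching one of $(Z,-(K_Z+\Delta_Z))$ induce the same filtration of $R$, hence the same $\JNA$ and the same $\DNA$ — the $F$-shift cancels between the $\LNA$-term and the potential, and $F$ is invisible to the Monge–Ampère energy because it is $q$-exceptional — so Ding stability transfers, and equals K-stability on the log Fano side.

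The final "in particular" is then immediate: uniform K-stability and K-stability coincide for the log Fano pair $(Z,\Delta_Z)$ (the finite-generation/K-stability equivalence, cf.\ \cite{Xu-survey} and the references therein), and since both notions are transported unchanged across the anticanonical model by the above, uniform K-stability of $(X,\Delta)$ is the same as K-stability of $(X,\Delta)$.

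The main obstacle is conceptual before it is technical. Naively one expects $\delta(X,\Delta)=\delta(Z,\Delta_Z)$, but this is false: the anticanonical model can contract $(K_X+\Delta)$-positive divisors and is not crepant in general (already visible when $\phi$ contracts a $(-3)$-curve on a surface), so $\delta(X,\Delta)>\delta(Z,\Delta_Z)$ can happen. The correct invariants to compare are the $\beta$/Ding invariants, not $\delta$ itself, and the technical heart is precisely the simultaneous shift of $A$ and $S$ by the \emph{same} quantity $v(F)$, together with the uniform control of $v(F)$ needed for the strict (uniform and stable) statements.
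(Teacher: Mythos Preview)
Your approach is essentially the paper's: pass to a common resolution, prove the shift identities $A_{X,\Delta}(v)=A_{Z,\Delta_Z}(v)+v(F)$ and $S_{X,\Delta}(v)=S_{Z,\Delta_Z}(v)+v(F)$, and read off the K-(semi)stable equivalences from the equality of $\beta$-invariants. The paper obtains the $S$-identity from $|\mu^*(-m(K_X+\Delta))|=|\pi^*(-m(K_Z+\Delta_Z))|+mF$, which is exactly your filtration-shift argument; your detour through $q$-exceptionality of $F$ is not needed (and in any case follows in one line from $q_*F=\phi_*(-(K_X+\Delta))+(K_Z+\Delta_Z)=0$, without any volume comparison), nor is the non-Archimedean alternative via $\DNA$, $\JNA$.

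The one substantive difference is the uniform control needed for uniform K-stability. You aim for $v(F)\le C\cdot S_{Z,\Delta_Z}(v)$ via a Seshadri--Izumi argument, but the step ``$v(F)$ is comparable to $v(\mathfrak m_{c_Z(v)})$'' is not justified as stated ($F$ lives on $W$, not on $Z$) and would take genuine work to make precise. The paper's bound is much simpler and goes through $A$ rather than $S$: since $(Z,\Delta_Z)$ is klt (forced once either side is K-semistable), one can pick $t>0$ so small that adding $tF$ to the crepant pullback of $(Z,\Delta_Z)$ on $W$ keeps the pair sub-lc, which immediately gives $A_{Z,\Delta_Z}(v)\ge t\cdot v(F)$ for all $v$. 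Feeding this into the shifted ratio yields $\delta(X,\Delta)\ge \dfrac{\delta(Z,\Delta_Z)(t+1)}{\delta(Z,\Delta_Z)+t}>1$ whenever $\delta(Z,\Delta_Z)>1$, while the other direction $\delta(Z,\Delta_Z)\ge\delta(X,\Delta)$ is immediate from the shift. This replaces your Seshadri--Izumi paragraph with a two-line log-canonical-threshold argument.
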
 

\begin{rem}In \cite{DR-bigness}, Ding stability notions for a projective klt pair $(X,\Delta)$ with big $-K_X-\Delta$ are developed. If one assumes $R=\bigoplus_{m\in r\cdot \bN} H^0(-m(K_X+\Delta))$ is finitely generated and denotes by $(Z,\Delta_Z)$ the anticanonical model, then one can show a similar statement to Theorem~\ref{t-maintheorem}; \textit{i.e.}\ the Ding stability notions for $(X,\Delta)$ are equivalent to the notions for $(Z,\Delta_Z)$. 
\end{rem}

\subsubsection*{Notation and Convention} Throughout this paper, we work over an algebraically closed field $\bk$ of characteristic $0$. We follow the standard terminology from \cites{KM98, Kol13}.

For a normal log pair $(X,\Delta)$ such that $K_X+\Delta$ is $\bQ$-Cartier and a divisor $E$ over $X$, we denote by $A_{X,\Delta}(E)$ the log discrepancy of $E$ with respect to $(X,\Delta)$.

We say a klt projective pair $(X,\Delta)$ is \emph{log Fano} if $(X,\Delta)$ is klt and $-K_X-\Delta$ is ample, and a klt projective pair $(X,\Delta)$ is \emph{of log Fano type} if there  exists an effective $\bQ$-divisor $D$ such that $(X,\Delta+D)$ is a log Fano pair.

We say an effective  $\bQ$-divisor $\Gamma$ on a  projective log pair $(X,\Delta)$ is an {\it $N$-complement} for a positive integer $N$ if $N(K_X+\Delta+\Gamma)\sim0$ and $(X,\Delta+\Gamma)$ is log canonical. A {\it $\bQ$-complement} is an $N$-complement for some $N$.

\subsection*{Acknowledgments}
We would like to thank Tam\'as Darvas for his  lecture that sparked the author's interest in this question and useful discussions afterwards. We would like to thank Ziquan Zhuang for communicating Example~\ref{example-non finitely generated} to us. We want to thank R\'emi Reboullet and Ruadha\'i Dervan, as well as the anonymous referee, for helpful comments. The project was initiated when the author attended Northwestern-UIC Complex Geometry Seminar, to the organizer of which the author owes his gratitude.

\section{\texorpdfstring{$\boldsymbol{S}$}{S}-invariants}\label{s-prelim}


Let $(X,\Delta)$ be an $n$-dimensional projective normal pair such that $-K_X-\Delta$ is big. For any prime divisor $E$ which appears on a birational model $\mu\colon Y\to X$, the $S$-invariant is defined as
$$
S_{X,\Delta}(E):=\frac{1}{\vol(-K_X-\Delta)}\int^{\infty}_{0}\vol(-\mu^*(K_X+\Delta)-tE)\ dt.
$$

\begin{defn}
If $(X,\Delta)$ is klt, we define 
$$\delta(X,\Delta):=\inf_E\frac{A_{X,\Delta}(E)}{S_{X,\Delta}(E)},$$
where $E$ runs through all valuations over $(X,\Delta)$. We say $(X,\Delta)$ is uniformly K-stable (resp.\ K-semistable), if $\delta(X,\Delta)>1$ (resp.\ $\delta(X,\Delta)\ge 1$).  We say $(X,\Delta)$ is K-stable if $A_{X,\Delta}(E)>S_{X,\Delta}(E)$ for any $E$ over $X$.
\end{defn}
\begin{rem}
When $(X,\Delta)$ is log Fano, the equivalence between this way of defining K-stability notions using valuations and the original one using test configurations, called the Fujita--Li criterion, is proved in \cite{Fujita-valuative}, \cite{Li-valuative}  and \cite{BX-uniqueness}.
For $(X,\Delta)$ with a big anticanonical class, the current definition is formulated in \cite{DZ-bigness}.
\end{rem}

\begin{rem}Theorem~\ref{t-maintheorem} says K-stability is indeed the same as uniform K-stability. For a log Fano pair, this is proved in \cite{LXZ-HRFG} (see \cite{XZ-localHRFG} for a different proof). 
\end{rem}

Fix $m\in r\cdot \bN$, let $R_m=H^0(X,-m(K_X+\Delta))$, and assume $N_m:=\dim H^0(X,-m(K_X+\Delta))>0$.
Following \cite{FO-basistype}, we say a $\bQ$-divisor $D$ is an \emph{$m$-basis type divisor} if 
\[
\frac{1}{m\cdot N_m}\ord_E\left({\div}(s_1)+\cdots +{\div}(s_{N_m})\right)
\]
for a basis $\{s_1,\ldots,s_{N_m}\}$ of $R_m$. In particular, $D\sim_{\bQ}-K_X-\Delta$. 

We  define $S_{X,\Delta,m}(E)$ (or $S_{m}(E)$ if $(X,\Delta)$ is clear) for any $E$ over $X$ as follows: $E$ yields a decreasing filtration $\cF^{\lambda}_E$ $(\lambda\in \bR)$ on $R_m:=H^0(X,-m(K_X+\Delta))$ by
$$
\cF^{\lambda}_E R_m=\left\{\,s\in H^0(X,-m(K_X+\Delta)) \ | \ \ord_E(s)\ge \lambda\,\right\},
$$
and 
\[
S_m(E)=\frac{1}{m\cdot N_m}\ord_E\left({\div}(s_1)+\cdots +{\div}(s_{N_m})\right)
\]
for any basis $\{s_1,\ldots,s_{N_m}\}$ of $R_m$ compatible with $\cF^{\lambda}_E$ $(\lambda\in \bR)$. Here the basis
$\{s_1,\ldots,s_{N_m}\}$ is compatible with $\cF^{\lambda}_E$ $(\lambda\in \bR)$ if for any $\lambda$, all the elements $s_i$ contained in  $\cF^{\lambda}_ER_m$ span $\cF^{\lambda}_ER_m$. Then
\[
S_m(E)=\frac{1}{m\cdot N_m}\sum_{\lambda\in \bN}\lambda\cdot\dim {\Gr}_E^{\lambda} R_m,
\]
where  $ {\Gr}_E^{\lambda} R_m:=\cF_E^{\lambda}R_m/\cF_E^{\lambda+1}R_m$.

We also define
\[
\delta_m(X,\Delta):=\inf_E\frac{A_{X,\Delta}(E)}{S_m(E)}.
\]

The following are basic properties proved in \cite{BJ-delta}.

\begin{thm}\label{t-Sm and S}
Keep the notation as above. 
\begin{enumerate}
\item\label{thm-1} We have $\lim_{m\to \infty}S_m(E)=S(E)$.
\item\label{thm-2} For any $\varepsilon>0$, there exists an $m_0$ such that for any $E$ over $X$ and $m\ge m_0$ with $m\in r\cdot\bN$, 
\[
S_m(E)\le (1+\varepsilon)S(E).
\]
\item\label{thm-3} We have $\delta_m(X,\Delta)=\inf_D\lct(X,\Delta;D)$, where $D$ runs through all $m$-basis type divisors. 
\item\label{thm-4} We have $\lim_{m\to \infty} \delta_m(X,\Delta)=\delta(X,\Delta)$.
\end{enumerate}
\end{thm}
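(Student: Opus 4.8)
The plan is to reduce all four parts to two facts about big $\bQ$-Cartier divisors: first, for a big $\bQ$-Cartier divisor $M$ on an $n$-dimensional projective variety, $\dim H^0(\lfloor mM\rfloor)=\frac{\vol(M)}{n!}m^n+o(m^n)$; second, writing $L:=-K_X-\Delta$ and fixing a model $\mu\colon Y\to X$ on which $E$ is a divisor, the function $\phi_E(t):=\vol(\mu^*L-tE)$ is continuous and non-increasing, is positive exactly on $[0,\tau(E))$ for the pseudoeffective threshold $\tau(E)\in(0,\infty)$, and $\phi_E^{1/n}$ is concave there (with $\tau(E)>0$ because $\mu^*L$ lies in the interior of the big cone). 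The common starting point is the identification $\cF^{\lambda}_E R_m=H^0(Y,\mu^*(mL)-\lceil\lambda\rceil E)$ (using $m\in r\bN$, so $mL$ is Cartier and $H^0(Y,\mu^*(mL))=H^0(X,mL)$), which gives
\[
m N_m\, S_m(E)=\sum_{j\ge 1}\dim\cF^{j}_E R_m=\sum_{j=1}^{\lceil\tau(E)m\rceil}h^0\bigl(Y,\mu^*(mL)-jE\bigr).
\]

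Part \eqref{thm-3} is then purely formal. Since $(X,\Delta)$ is klt, $\lct(X,\Delta;D)=\inf_E A_{X,\Delta}(E)/\ord_E(D)$ for any effective $\bQ$-divisor $D$, and for a fixed $E$ the quantity $\sup_D\ord_E(D)$ over all $m$-basis type divisors $D$ equals $S_m(E)$ — this is exactly what it means to pick a basis of $R_m$ compatible with $\cF^{\lambda}_E$. Swapping the two infima,
\[
\delta_m(X,\Delta)=\inf_E\frac{A_{X,\Delta}(E)}{S_m(E)}=\inf_E\inf_D\frac{A_{X,\Delta}(E)}{\ord_E(D)}=\inf_D\lct(X,\Delta;D).
\]
Part \eqref{thm-1} is a Riemann-sum computation: substituting $h^0(Y,\mu^*(mL)-jE)=\frac{m^n}{n!}\phi_E(j/m)+o(m^n)$ — the error being uniform in $j$ by monotonicity and concavity of $\phi_E^{1/n}$ — into the display above and dividing by $m N_m=\frac{m^{n+1}}{n!}\vol(L)(1+o(1))$, the sum $\tfrac1m\sum_j\phi_E(j/m)$ converges to $\int_0^{\tau(E)}\phi_E(t)\,dt$, so $S_m(E)\to\frac{1}{\vol(L)}\int_0^\infty\phi_E(t)\,dt=S_{X,\Delta}(E)$.

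The substantive point is part \eqref{thm-2}, where the convergence in \eqref{thm-1} must be made uniform in $E$. My plan is to control the error terms in the Riemann-sum argument by quantities depending only on $L$ and a fixed ample class, not on $E$. Given $\varepsilon>0$, I would invoke Fujita approximation to replace $\mu^*L$, after a further birational modification, by an ample class $A$ with $\vol(A)\ge(1-\varepsilon)\vol(L)$ plus an effective class, thereby reducing to the ample setting; there the required uniform lower bound $h^0(Y,A'-jE)\ge\frac{m^n}{n!}\vol(A'-\tfrac{j}{m}E)-(\text{error independent of }E)$ can be obtained from a Noetherian argument on the associated Rees algebra, equivalently from the uniform convergence of normalized Okounkov-body volumes, and the submultiplicativity $\cF^{a}_E R_p\cdot\cF^{b}_E R_q\subseteq\cF^{a+b}_E R_{p+q}$ lets one promote an estimate for one $m$ to all its multiples with a loss that tends to $0$. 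I expect precisely this uniformity — producing a single sequence $\varepsilon_m\to0$ with $S_m(E)\le(1+\varepsilon_m)S_{X,\Delta}(E)$ for every $E$ — to be the main obstacle; everything else is bookkeeping. Granting \eqref{thm-2}, part \eqref{thm-4} follows: for $m\ge m_0$ one has $A_{X,\Delta}(E)/S_m(E)\ge(1+\varepsilon)^{-1}A_{X,\Delta}(E)/S_{X,\Delta}(E)$ for all $E$, hence $\liminf_m\delta_m(X,\Delta)\ge\delta(X,\Delta)$; conversely, for any $\eta>0$ choosing $E$ with $A_{X,\Delta}(E)/S_{X,\Delta}(E)<\delta(X,\Delta)+\eta$ and applying \eqref{thm-1} gives $\limsup_m\delta_m(X,\Delta)\le A_{X,\Delta}(E)/S_{X,\Delta}(E)<\delta(X,\Delta)+\eta$, and letting $\eta\to0$ yields $\delta_m(X,\Delta)\to\delta(X,\Delta)$. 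All of this is carried out in \cite{BJ-delta} in this generality.
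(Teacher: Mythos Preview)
Your proposal is correct and takes essentially the same approach as the paper: the paper's proof consists entirely of citations to \cite{BJ-delta} (Lemma~2.9, Corollary~2.10, Proposition~4.3, Theorem~4.4), and your write-up is a faithful sketch of precisely those arguments, ending with the same reference. The only imprecision is calling the point where $\phi_E$ vanishes the ``pseudoeffective threshold'' (it is the threshold for bigness), but this is harmless for the Riemann-sum and integral computations.
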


\begin{proof}Statement~\eqref{thm-1} follows from \cite[Lemma 2.9]{BJ-delta} and~\eqref{thm-2} from \cite[Corollary 2.10]{BJ-delta}.
Statement~\eqref{thm-3} is \cite[Proposition 4.3]{BJ-delta}, and~\eqref{thm-4} is \cite[Theorem 4.4]{BJ-delta}.
\end{proof} 

We can consider more general filtrations.
\begin{defn}\label{defn-filtration}
By a (linearly bounded) filtration $\cF$ on $R(X, -r(K_X+\Delta)) =\bigoplus_{m\in r\cdot \bN}R_m$, we mean the data of a family
$\cF^{\lambda}R_m \subseteq R_m$
of $\bk$-vector subspaces of $R_m$ for $m \in r\cdot \bN$ and $\lambda \in \bR$, satisfying
\begin{enumerate}
\item $\cF^{\lambda'}R_m\subseteq \cF^{\lambda}R_m $ when $\lambda \ge \lambda'$;
\item $\cF^{\lambda}R_m=\bigcap_{\lambda'<\lambda}\cF^{\lambda'}R_m$ for any $\lambda$;
\item there exist $e_-, e_+\in \bR$ such that $\cF^{me_-}R_m=R_m$ and $\cF^{me_+}R_m=0$ for  any $m$;
\item $\cF^{\lambda}R_m\cdot\cF^{\lambda'}R_{m'}\subseteq \cF^{\lambda+\lambda'}R_{m+m'}$.
\end{enumerate}
\end{defn}
For any filtration $\cF$ on $R$, we can  define $S_m(\cF)$ and $S(\cF)$ as in \cite[Sections~2.5 and~2.6, pp.~15--16]{BJ-delta},
and we have
\begin{equation}
 \lim_{m\to \infty}S_m(\cF)\lra S(\cF); 
\end{equation}
see \cite[Lemma 2.9]{BJ-delta}.


\begin{lem}\label{lem-1/n+1}
If $A$ is an effective ample $\bQ$-divisor on $X$ such that $-K_X-\Delta-A$ is pseudoeffective, then $S_{X,\Delta}(A)\ge \frac{1}{n+1}$.
\end{lem}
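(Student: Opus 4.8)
The plan is to read off everything from the definition of the $S$-invariant. Since $A$ is a $\bQ$-divisor on $X$ itself, I may take $\mu=\mathrm{id}$ in the definition (interpreting $S_{X,\Delta}(A)$ through the same volume integral, even though $A$ need not be prime), so that, writing $L:=-K_X-\Delta$,
\[
S_{X,\Delta}(A)=\frac{1}{\vol(L)}\int_0^\infty \vol(L-tA)\,dt .
\]
Note that $\vol(L)>0$: indeed $L=(L-A)+A$ is the sum of a pseudoeffective class and an ample class, hence big. Also, since $A$ is ample, $L-tA$ fails to be pseudoeffective for $t\gg 0$, so the integral is genuinely over a bounded interval.

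The key step is a lower bound for $\vol(L-tA)$ on $[0,1]$. For $t\in[0,1]$ I would write
\[
L-tA=(1-t)L+t(L-A),
\]
where $(1-t)L$ is big for $t<1$ and $t(L-A)=t(-K_X-\Delta-A)$ is pseudoeffective by hypothesis. Using the monotonicity of the volume function — namely $\vol(P+N)\ge \vol(P)$ whenever $P$ is big and $N$ is pseudoeffective, which follows from the superadditivity $\vol(P+N)^{1/n}\ge \vol(P)^{1/n}+\vol(N)^{1/n}$ (the Khovanskii–Teissier/Brunn–Minkowski inequality for volumes, see Lazarsfeld) together with continuity of $\vol$ to pass from the big to the merely pseudoeffective case — I get
\[
\vol(L-tA)\ \ge\ \vol\big((1-t)L\big)=(1-t)^n\vol(L),\qquad t\in[0,1].
\]

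It then only remains to integrate:
\[
\int_0^\infty \vol(L-tA)\,dt\ \ge\ \int_0^1 (1-t)^n\vol(L)\,dt=\frac{\vol(L)}{n+1},
\]
and dividing by $\vol(L)$ gives $S_{X,\Delta}(A)\ge \frac1{n+1}$. The only nontrivial ingredient is the volume monotonicity $\vol(P+N)\ge\vol(P)$ for $P$ big and $N$ pseudoeffective; this is the point to be careful about, and it is dispatched either by citing log-concavity of the volume, or directly by the elementary approximation $\vol(P+N+\varepsilon A')\ge \vol(P)$ for an ample $A'$ followed by $\varepsilon\to 0$. Everything else is a one-line estimate.
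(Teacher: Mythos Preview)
Your proof is correct and is essentially the same as the paper's: both decompose $-K_X-\Delta-tA=(1-t)(-K_X-\Delta)+t(-K_X-\Delta-A)$ for $t\in[0,1]$, invoke volume monotonicity under addition of a pseudoeffective class to get $\vol(-K_X-\Delta-tA)\ge (1-t)^n\vol(-K_X-\Delta)$, and then integrate. You are just a bit more explicit in justifying the monotonicity step.
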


\begin{proof}
Since $-K_X-\Delta-A$ is pseudoeffective, for any $t\ge 0$, we have  
$$\vol(-K_X-\Delta-tA)\ge \vol((1-t)(-K_X-\Delta)).$$ 
Thus
\begin{align*}\pushQED{\qed}
S(A) &=  \frac{1}{\vol(-K_X-\Delta)}\int^{+\infty}_{0}\vol(-K_X-\Delta-tA)dt\\
  &\ge  \frac{1}{\vol(-K_X-\Delta)}\int^1_0\vol((1-t)(-K_X-\Delta))dt\\
&= \frac{1}{(n+1)}.\hfill\qedhere \popQED
\end{align*}
\renewcommand{\qed}{}   
 \end{proof}

\section{Finite generation}

\subsection{$\bQ$-complements and finite generation}

For a $\bQ$-divisor $D$ with $|rD|\neq 0$ and any $m\in \bN$, we denote by $\Bs(|mrD|)$ the base ideal. We can define the log canonical threshold of the asymptotic linear series as follows:
\[
\lct(X,\Delta;\|-K_X-\Delta\|):=\sup_\ell\, \lct\left(X,\Delta;\frac{1}{\ell r} \Bs| \ell r(-K_X-\Delta)|\right).
\]
We can define a sequence of multiplier  ideals
\[
 \mathcal{I}\left(X,\Delta; \frac{1}{r}\Bs(|rD|)\right) \subseteq  \mathcal{I}\left(X,\Delta; \frac{1}{2r}\Bs(|2rD|)\right) \subseteq\cdots \subseteq  \mathcal{I}\left(X,\Delta; \frac{1}{\ell!r}\Bs(|\ell!rD|)\right)\subseteq \cdots .
\]
By the ascending chain condition of ideals, this sequence will stabilize. We denote the maximal element by $ \mathcal{I}(X,\Delta; \|-K_X-\Delta\|)$ and call it the \emph{asymptotic multiplier ideal sheaf of $D$}.  For more background, see \cite[Section 11.1]{Lazasfeldbook2}. Recall that for any ideal $\fa\subseteq \cO_X$, we have $\lct(X,\Delta;\fa)>1$ if and only if $\cI(X,\Delta;\fa)>1 $. 

\begin{lem}\label{lem-multiplier trivial}
Assume $(X,\Delta)$ is a projective pair with $-K_X-\Delta$  big.
  If $$\lct(X,\Delta;\|-K_X-\Delta\|)>1 \quad   (\mbox{or equivalently }\ \mathcal{I}(X,\Delta; \|-K_X-\Delta\|)=\mathcal{O}_X),$$ then $(X,\Delta)$ is of log Fano type.
\end{lem}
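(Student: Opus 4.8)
The plan is to show that $(X,\Delta)$ admits a $\bQ$-complement $\Gamma$ that is also klt, i.e.\ an effective $\bQ$-divisor $\Gamma$ with $N(K_X+\Delta+\Gamma)\sim 0$ for some $N$ and $(X,\Delta+\Gamma)$ klt; once we have this, a standard perturbation argument (subtracting a small ample from $\Gamma$, which we can arrange since $-K_X-\Delta$ is big so some small multiple of it moves) produces the desired $\Gamma'$ with $-K_X-\Delta-\Gamma'$ ample and $(X,\Delta+\Gamma')$ still klt, proving the log Fano type conclusion. So the task reduces to producing a klt $\bQ$-complement under the hypothesis $\cI(X,\Delta;\|-K_X-\Delta\|)=\cO_X$.

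First I would fix $\ell$ large and divisible enough that the asymptotic multiplier ideal is computed at level $\ell r$, i.e.\ $\cI(X,\Delta;\frac{1}{\ell r}\Bs|\ell r(-K_X-\Delta)|)=\cO_X$, and moreover (by the subadditivity/Noetherian properties of asymptotic multiplier ideals, \cite[Section 11.1]{Lazasfeldbook2}) that $\cI(X,\Delta;\|-K_X-\Delta\|)=\cI(X,\Delta;\frac{1}{p}\Bs|p(-K_X-\Delta)|)$ for $p=\ell r$. The key input is Nadel vanishing applied on a log resolution: since $-K_X-\Delta$ is big, writing $-K_X-\Delta\equiv A+E$ with $A$ ample $\bQ$-divisor and $E$ effective, and using that the asymptotic multiplier ideal with the trivial value $\cO_X$ together with Nadel vanishing forces the linear system $|p(-K_X-\Delta)|$ to separate enough jets, one extracts a general member $G\in |p(-K_X-\Delta)|$ such that $(X,\Delta+\frac{1}{p}G)$ is klt (this klt-ness is exactly the statement $\cI(X,\Delta;\frac{1}{p}\Bs|p(-K_X-\Delta)|)=\cO_X$, since a general member of a linear system has multiplier ideal equal to the multiplier ideal of the base ideal of that system). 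Then $\Gamma:=\frac{1}{p}G$ is effective, $(X,\Delta+\Gamma)$ is klt, and $K_X+\Delta+\Gamma\sim_{\bQ}K_X+\Delta+(-K_X-\Delta)\sim_{\bQ}0$; clearing denominators gives $N(K_X+\Delta+\Gamma)\sim 0$. Thus $(X,\Delta)$ has a klt $\bQ$-complement.

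For the final perturbation: since $-K_X-\Delta$ is big we may write $-K_X-\Delta\sim_{\bQ}A_0+E_0$ with $A_0$ ample and $E_0\ge 0$; choosing the general member $G$ above inside the linear system of a multiple of $-K_X-\Delta$, for small rational $\epsilon>0$ the divisor $\Gamma':=(1-\epsilon)\Gamma+\epsilon E_0$ is effective, $(X,\Delta+\Gamma')$ is still klt (a convex combination of klt boundaries, klt being an open condition along the line segment), and $-K_X-\Delta-\Gamma'\sim_{\bQ}\epsilon(-K_X-\Delta-E_0)\sim_{\bQ}\epsilon A_0$ is ample. Hence $(X,\Delta+\Gamma')$ is log Fano.

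The main obstacle I anticipate is the first step — making precise that the hypothesis $\cI(X,\Delta;\|-K_X-\Delta\|)=\cO_X$ genuinely produces a klt complement, rather than just a log canonical one. The subtlety is that one must pass from the asymptotic multiplier ideal being trivial to the existence of an \emph{honest} effective divisor $\Gamma\sim_{\bQ}-K_X-\Delta$ with $(X,\Delta+\Gamma)$ klt: this requires that for $p$ large and divisible the linear system $|p(-K_X-\Delta)|$ is nonempty and that a general member $G$ satisfies $\cI(X,\Delta;\frac{1}{p}G)=\cI(X,\Delta;\frac{1}{p}\Bs|p(-K_X-\Delta)|)$, which is a standard but nontrivial fact about general members of linear systems (needing the base locus to be cut out scheme-theoretically in the relevant range, or equivalently working on a common log resolution). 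Establishing bigness-compatible nonemptiness and controlling the fixed part of the system so that the general member's multiplier ideal matches the asymptotic one is where the argument must be handled with care.
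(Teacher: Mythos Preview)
Your proposal is correct and takes essentially the same approach as the paper: extract an effective $D\sim_{\bQ}-(K_X+\Delta)$ with $(X,\Delta+D)$ klt from the hypothesis, then perturb via bigness $D\sim_{\bQ}A+E$ to $\Gamma=(1-\varepsilon)D+\varepsilon E$ so that $-K_X-\Delta-\Gamma\sim_{\bQ}\varepsilon A$ is ample. The paper does this in three lines, simply asserting the existence of $D$; your mention of Nadel vanishing and jet separation is an unnecessary detour, since (as you also correctly note) the existence of such $D$ follows directly from the fact that for sufficiently divisible $p$ a general member of $|p(-K_X-\Delta)|$ has the same multiplier ideal as the base ideal.
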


\begin{proof}
From the assumption, there exists an effective $\bQ$-divisor $D\sim_{\bQ}-(K_X+\Delta)$ such that $(X,\Delta+D)$ is klt. Since $D$ is big, then $D\sim_{\bQ}A+E$ for an ample $\bQ$-divisor $A$ and an effective $\bQ$-divisor $E$.  Set
$$\Gamma:=(1-\varepsilon)D+\varepsilon E$$
for $0<\varepsilon\ll 1$; then $(X,\Delta+\Gamma)$ is klt, and $-K_X-\Delta-\Gamma\sim \varepsilon A$ is ample. Thus $(X,\Delta)$ is of log Fano type.
\end{proof}

\begin{defn}
For any projective pair $(X,\Delta)$, we define the constant $a(X,\Delta)$ by 
\begin{equation}
\begin{split}
a(X,\Delta)=  \sup_{t\in \bR}
    \left\{
    \begin{aligned}
         & \mbox{there exists an ample divisor $A$ such that $A-t(K_X+\Delta)$} \\
         & \mbox{\ \ \ \ is ample and $-K_X-\Delta-A$ is pseudoeffective }
    \end{aligned}
    \right\}.
\end{split}
\end{equation}
\end{defn}
If $-K_X-\Delta$ is big, then $a(X,\Delta)>0$;  if $-K_X-\Delta$ is ample, then $a(X,\Delta)=+\infty$.

\begin{assump}\label{assump-delta}
Let $(X,\Delta)$ be an $n$-dimensional klt projective pair with $-K_X-\Delta$  big. Assume 
\begin{equation}\label{eq lower bound}
\delta(X,\Delta)>\frac{n+1}{n+1+a_0},\quad\mbox{where $a_0=a(X,\Delta)$}.
\end{equation}

\end{assump}

Now we can show the following.

\begin{thm}\label{thm-log fano pair}
 Let $(X,\Delta)$ satisfy Assumption~\ref{assump-delta}; then $(X,\Delta)$ is of log Fano type. In particular, any Cartier divisor $E$ on $X$ satisfies that $R(X,E):=\bigoplus_{m\in \bN}H^0(X,mE)$ is finitely generated.
\end{thm}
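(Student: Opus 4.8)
The plan is to deduce Theorem~\ref{thm-log fano pair} from Lemma~\ref{lem-multiplier trivial}, so the goal becomes showing that Assumption~\ref{assump-delta} forces $\lct(X,\Delta;\|-K_X-\Delta\|)>1$, equivalently $\mathcal{I}(X,\Delta;\|-K_X-\Delta\|)=\mathcal{O}_X$. The natural route is to compare the asymptotic linear series of $-K_X-\Delta$ with the $m$-basis type divisors whose log canonical thresholds compute $\delta_m$ (Theorem~\ref{t-Sm and S}\eqref{thm-3}), and then let $m\to\infty$ using Theorem~\ref{t-Sm and S}\eqref{thm-4}. First I would fix a small rational $\eta>0$ and an ample $\bQ$-divisor $A$ with $A-a_0'(K_X+\Delta)$ ample and $-K_X-\Delta-A$ pseudoeffective, where $a_0'$ is slightly less than $a_0$; by Lemma~\ref{lem-1/n+1} applied on a suitable model (or directly), such an $A$ contributes $S_{X,\Delta}(A)\ge \tfrac{1}{n+1}$, and more importantly it lets one write a general member of $|\ell r(-K_X-\Delta)|$, after scaling, as $\tfrac{1}{a_0'}A$ plus an effective piece, so that $\tfrac{1}{\ell r}\mathrm{Bs}|\ell r(-K_X-\Delta)|$ differs from an honest divisor by something controlled by $A$.

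The key computation is the following. For $m\in r\bN$ large, consider the base ideal $\fb_m=\mathrm{Bs}(|m(-K_X-\Delta)|)$ and its blowup; any divisor $E$ over $X$ with $A_{X,\Delta}(E)\le S_m(E)\cdot\delta_m(X,\Delta)^{-1}\cdot A_{X,\Delta}(E)$—more precisely, one uses that for the $m$-basis type divisor $D_m$ computing $\mathrm{lct}(X,\Delta;D_m)=\delta_m$, one has $\ord_E(D_m)\le S_m(E)$ for all $E$ and $D_m\ge$ (a multiple related to) the base locus of $|m(-K_X-\Delta)|$. The point is that $\mathrm{ord}_E\big(\tfrac{1}{mr}\mathrm{Bs}|mr(-K_X-\Delta)|\big)\le \mathrm{ord}_E(D_{mr})\le S_{mr}(E)$, so by Theorem~\ref{t-Sm and S}\eqref{thm-2}, for any $\varepsilon>0$ and $m\gg0$ this is $\le (1+\varepsilon)S(E)\le (1+\varepsilon)\big(S(-K_X-\Delta-A)+S(A)\big)\cdot(\text{something})$. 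I then bound $A_{X,\Delta}(E)$ from below: the hypothesis $\delta(X,\Delta)>\tfrac{n+1}{n+1+a_0}$ gives $A_{X,\Delta}(E)>\tfrac{n+1}{n+1+a_0}S(E)$ for all $E$, and combining with the estimate $S(E)\le S(E; -K_X-\Delta)$ together with the splitting off of $\tfrac{1}{a_0}A$ (for which $S(A)\ge\tfrac{1}{n+1}$) yields $A_{X,\Delta}(E)>\mathrm{ord}_E\big(\tfrac{1}{mr}\mathrm{Bs}|mr(-K_X-\Delta)|\big)$ for all $E$ once $m$ is large and $\varepsilon$ small. That inequality for every $E$ is exactly $\mathrm{lct}\big(X,\Delta;\tfrac{1}{mr}\mathrm{Bs}|mr(-K_X-\Delta)|\big)>1$, hence $\lct(X,\Delta;\|-K_X-\Delta\|)>1$, and Lemma~\ref{lem-multiplier trivial} finishes the first assertion.

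For the ``in particular'' statement: once $(X,\Delta)$ is of log Fano type, it is a Mori dream space by \cite{BCHM} (klt log Fano type varieties have finitely generated Cox ring), and in particular $R(X,E)=\bigoplus_{m\in\bN}H^0(X,mE)$ is a finitely generated $\bk$-algebra for any Cartier divisor $E$; I would just cite this.

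The main obstacle I anticipate is the bookkeeping in the second paragraph: making precise the comparison between $\mathrm{ord}_E$ of the asymptotic base locus and $S_m(E)$ (i.e.\ that the asymptotic multiplier ideal is ``dominated'' by the generic $m$-basis type divisor), and correctly tracking how the ample summand $A$ with slope $a_0$ enters the denominator to produce exactly the bound $\tfrac{n+1}{n+1+a_0}$ rather than a weaker constant. In other words, the delicate point is matching the extremal nature of the constant in Assumption~\ref{assump-delta} with the extremal behavior of $S$ on $A$ given by Lemma~\ref{lem-1/n+1}; everything else is a limiting argument via Theorem~\ref{t-Sm and S}.
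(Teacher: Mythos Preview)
Your argument is fine when $\delta(X,\Delta)>1$: the chain
\[
\ord_E\!\left(\tfrac{1}{m}\Bs|m(-K_X-\Delta)|\right)\;\le\;\ord_E(D_m)\;\le\;S_m(E)\;\le\;(1+\varepsilon)S(E)
\]
together with $A_{X,\Delta}(E)\ge\delta\cdot S(E)$ gives $\lct(X,\Delta;\|-K_X-\Delta\|)>1$, and Lemma~\ref{lem-multiplier trivial} applies. This is exactly how the paper handles that case.

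The gap is in the range $\tfrac{n+1}{n+1+a_0}<\delta(X,\Delta)\le 1$. Your displayed chain only yields $\lct\ge\delta/(1+\varepsilon)$, which is \emph{not} $>1$. To rescue this you would need a strict inequality $\ord_E(\tfrac{1}{m}\Bs|mL|)\le c\,S(E)$ with $c<\delta$ uniform in $E$, and your ``splitting off $\tfrac{1}{a_0}A$'' paragraph does not produce one. Lemma~\ref{lem-1/n+1} bounds $S_{X,\Delta}(A)$ from below for the \emph{fixed} ample divisor $A$; it says nothing about $\ord_E$ of the base locus for a varying valuation $E$, and the expression ``$S(E)\le S(E;-K_X-\Delta-A)+S(A)\cdot(\text{something})$'' conflates $S$-invariants of the test divisor $E$ with those of the auxiliary divisor $A$. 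As written, the constant $\tfrac{n+1}{n+1+a_0}$ never actually enters your estimate.

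The paper does \emph{not} go through Lemma~\ref{lem-multiplier trivial} when $\delta\le 1$. Instead it constructs the log Fano boundary directly: choose a general prime $H\in|m_0A|$, take an $m$-basis type divisor $D_m$ \emph{compatible with} $H$, so that $D_m=F_m+b_mH$ with $b_m\to S(H)\ge \tfrac{1}{m_0(n+1)}$ by Lemma~\ref{lem-1/n+1}. For $\delta'<\min\{\delta_m,1\}$ the pair $(X,\Delta+\delta'F_m)$ is klt (since $F_m\le D_m$), and
\[
-K_X-\Delta-\delta'F_m\;\sim_{\bQ}\;(1-\delta')(-K_X-\Delta)+\delta' b_m H.
\]
Ampleness of the right side is where the assumption $\delta>\tfrac{n+1}{n+1+a_0}$ is used: rewriting via $A+t(-K_X-\Delta)\sim_{\bQ}A_0$ ample, one needs $1-\delta'<t m_0 b_m \delta'$, and since $b_m\gtrsim\tfrac{1}{m_0(n+1)}$ and $t$ can be taken close to $a_0$, this is exactly $1-\delta<\tfrac{a_0}{n+1}\delta$. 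So the role of $A$ and of Lemma~\ref{lem-1/n+1} is to guarantee that the portion $b_mH$ stripped from $D_m$ is large enough to make the residual anticanonical class ample, not to bound the base locus.

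Your final paragraph on finite generation via \cite{BCHM} is correct and matches the paper.
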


\begin{proof}Let us first prove this when $\delta(X,\Delta)>1$ as it is quite straightforward.
By Theorem~\ref{t-Sm and S}, we know that for a sufficiently large $m$ and any $m$-basis type divisor $D$,
$$\lct(X,\Delta;D)\ge \delta_m(X,\Delta)>1.$$ Thus we can apply Lemma~\ref{lem-multiplier trivial}.

\medskip

In the general case, we may assume $\delta(X,\Delta)\le 1$, and we need some perturbation argument.  By our definition of $a(X,\Delta)$, for any $t\in (0,a(X,\Delta))$, there exists an ample $\bQ$-divisor $A$ such that
\[
-K_X-\Delta-A\sim_{\bQ}E_1\quad\mbox{and}\quad A-t(K_X+\Delta)\sim_{\bQ}A_0,
\]
where $E_1$ is an effective $\bQ$-divisor  and $A_0$ is an ample $\bQ$-divisor.  Moreover, by \eqref{eq lower bound} we may assume 
\begin{equation}\label{eq-t close to a}
1-\delta(X,\Delta)< \frac{t}{n+1}\delta(X,\Delta).
\end{equation}

 Fix $m_0\in \bN$ such that $|m_0A|$ is base-point-free. Then for any prime divisor $H\in |m_0A|$, by Lemma~\ref{lem-1/n+1},
\begin{align*}
S(H) &= \frac{1}{\vol(-K_X-\Delta)}\int\vol(-K_X-\Delta-tH)dt\\
  &= \frac{1}{m_0}S_{X,\Delta}(A) \\
  &\ge \frac{1}{m_0(n+1)}.
\end{align*}

We can choose an $m$-basis type $\bQ$-divisor $D_m$ compatible with  $H$, so we can write $D_m=F_m+b_mH$, where 
\begin{equation}\label{eq-estimate bm}
\lim_{m\to \infty} b_m=\lim_{m\to \infty} S_m(H)=S(H) \ge \frac{1}{m_0(n+1)}.
\end{equation}

By \eqref{eq-t close to a}, \eqref{eq-estimate bm}, and the equality $\lim_m\delta_m(X,\Delta)=\delta(X,\Delta)$, we can find a sufficiently large $m$ and a positive $\delta'$  such that $\delta'<\min\{\delta_m(X,\Delta),1\}$ and
 \begin{equation}\label{eq-estimate delta}
 1-\delta'< tm_0b_m\delta'.
 \end{equation}
Then $(X,\Delta+\delta'F_m)$ is klt, as $(X,\Delta+\delta'D_m)$ is klt and $D_m=F_m+b_mH$. Moreover,
\[
-K_X-\Delta-\delta'F_m\sim_{\mathbb{Q}} -(1-\delta')(K_X+\Delta)+\delta'b_mH,
\]
which implies $(X,\Delta+\delta'F_m)$ is a log Fano pair since
\begin{align*}
  -(1-\delta')(K_X+\Delta)+\delta'b_mH & \sim_{\bQ}(1-\delta')\left(-(K_X+\Delta)+\frac{1}{t}A\right)+\left(\delta'b_mm_0-\frac{1-\delta'}{t}\right)A \\
  &\sim_{\bQ}\frac{1-\delta'}{t}A_0+\left(\delta'b_mm_0-\frac{1-\delta'}{t}\right)A
  \end{align*}
is ample by \eqref{eq-estimate delta}.

The last statement then follows from \cite{BCHM}.
\end{proof}

\begin{cor}\label{cor-logfanomodel}
Let $(X,\Delta)$ satisfy  Assumption~\ref{assump-delta}. Let $r(K_X+\Delta)$ be Cartier and $Z:={\Proj}~R(X,-r(K_X+\Delta))$. Denote by $\Delta_Z$ the birational transform of $\Delta$ on $Z$; then $(Z,\Delta_Z)$ is a log Fano pair.
\end{cor}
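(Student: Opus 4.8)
The plan is to deduce Corollary~\ref{cor-logfanomodel} from Theorem~\ref{thm-log fano pair} together with standard results from the minimal model program. By Theorem~\ref{thm-log fano pair}, the pair $(X,\Delta)$ is of log Fano type, so in particular the anticanonical ring $R = R(X,-r(K_X+\Delta))$ is finitely generated. Thus $Z = \Proj R$ is a well-defined normal projective variety, and there is a birational contraction $\pi\colon X\dashrightarrow Z$; I would first recall that $-K_X-\Delta$ big plus finite generation of $R$ means $\pi$ is precisely the ample model of $-(K_X+\Delta)$, so that if $g\colon W\to X$ and $h\colon W\to Z$ resolve the map, then $g^*(-(K_X+\Delta)) \sim_{\bQ} h^*(-(K_Z+\Delta_Z)) + F$ with $F$ effective and $h$-exceptional, and $-(K_Z+\Delta_Z)$ ample. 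This is exactly the statement that $Z$ is the anticanonical model.

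Next I would check that $(Z,\Delta_Z)$ is klt. The cleanest route is to run a $(K_X+\Delta)$-MMP with scaling, or rather a minimal model program that realizes the contraction to the anticanonical model: since $(X,\Delta)$ is of log Fano type, it is a Mori dream space, and the anticanonical model $Z$ is obtained from $X$ by a sequence of $(K_X+\Delta)$-flips and divisorial contractions followed by a fiber-type-free ample contraction, each step of which preserves kltness and does not increase log discrepancies in the relevant sense. Concretely, for any valuation $E$ over $Z$ one compares $A_{Z,\Delta_Z}(E)$ with $A_{X,\Delta}(E)$ using the relation above: from $K_W + g^{-1}_*\Delta + (\text{exc.}) = g^*(K_X+\Delta) + (\ge 0)$ and the negativity lemma applied to $h$, one gets $A_{Z,\Delta_Z}(E) \ge A_{X,\Delta}(E) > 0$ for all $E$, hence $(Z,\Delta_Z)$ is klt.

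A slightly different and perhaps more self-contained argument, which avoids invoking the MMP machinery directly, is to use the log Fano type structure on $X$ already produced in the proof of Theorem~\ref{thm-log fano pair}: there one finds $\Gamma$ with $(X,\Delta+\Gamma)$ klt log Fano. Then for $0 < \varepsilon \ll 1$ the pair $(X,\Delta+\varepsilon\Gamma)$ is still klt, and $-(K_X+\Delta+\varepsilon\Gamma) \sim_{\bQ} (1-\varepsilon)(-(K_X+\Delta)) + \varepsilon(-(K_X+\Delta+\Gamma))$ has the same ample model $Z$ as $-(K_X+\Delta)$. Since $(X,\Delta+\varepsilon\Gamma)$ is klt log Fano and $Z$ is its ample/anticanonical model, standard MMP theory (e.g.\ \cite{BCHM}, \cite[Corollary 1.4.3]{BCHM} or the discussion in \cite{Kol13}) gives that $(Z,\Delta_Z+\varepsilon\Gamma_Z)$ is klt log Fano, and a fortiori $(Z,\Delta_Z)$ is klt; moreover $-(K_Z+\Delta_Z)$ is ample because it is $\bQ$-linearly equivalent to the descent of the semiample big class $-(K_X+\Delta)$, which becomes ample on its ample model. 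This shows $(Z,\Delta_Z)$ is a log Fano pair.

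The main obstacle is the bookkeeping required to identify $Z = \Proj R$ with the \emph{anticanonical} model in the precise sense of birational geometry — i.e.\ verifying that the natural polarization $\cO_Z(1)$ pulls back to (a multiple of) $-(K_X+\Delta)$ modulo an effective exceptional divisor, and that $\Delta_Z$ (defined as the strict transform) makes $-(K_Z+\Delta_Z)$ $\bQ$-Cartier and ample. Once that identification is in place, kltness of $(Z,\Delta_Z)$ and ampleness of $-(K_Z+\Delta_Z)$ follow formally from the negativity lemma and the log Fano type hypothesis; the perturbation by $\varepsilon\Gamma$ is only a convenience to stay within the well-developed klt log Fano MMP rather than invoke a relative statement directly.
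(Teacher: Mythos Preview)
Your discrepancy comparison in the first approach has the sign backwards, and this is not a bookkeeping slip but the heart of the matter. From $g^*(-(K_X+\Delta)) \sim_{\bQ} h^*(-(K_Z+\Delta_Z)) + F$ with $F\ge 0$ you get $h^*(K_Z+\Delta_Z) = g^*(K_X+\Delta) + F$, hence
\[
A_{Z,\Delta_Z}(E) \;=\; A_{X,\Delta}(E) - \ord_E(F) \;\le\; A_{X,\Delta}(E),
\]
which is exactly the relation the paper records immediately after the corollary (their $B$ is your $F$). The anticanonical model is reached by a $(K_X+\Delta)$-\emph{nonnegative} contraction, so discrepancies can only drop; kltness of $(X,\Delta)$ alone does not push forward to $(Z,\Delta_Z)$. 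Likewise, ``run a $(K_X+\Delta)$-MMP'' goes the wrong way: that would make $K_X+\Delta$ nef, not produce the ample model of $-(K_X+\Delta)$.

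Your second approach is closer in spirit to the paper's, but the $\Gamma$ you choose is the wrong one. With $(X,\Delta+\Gamma)$ klt log Fano (so $-(K_X+\Delta+\Gamma)$ ample, $\Gamma\not\sim_{\bQ}-(K_X+\Delta)$), the class $-(K_X+\Delta+\varepsilon\Gamma)=-(K_X+\Delta)-\varepsilon\Gamma$ need not have the same ample model as $-(K_X+\Delta)$; adding a small multiple of a non-proportional effective divisor can move you to an adjacent Mori chamber. The paper instead takes a \emph{full} klt $\bQ$-complement: an effective $\Gamma\sim_{\bQ}-(K_X+\Delta)$ with $(X,\Delta+\Gamma)$ klt (this exists once $(X,\Delta)$ is of log Fano type, e.g.\ add a general member of $|-m(K_X+\Delta+\Gamma')|$ to your log Fano boundary $\Gamma'$). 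Then $K_X+\Delta+\Gamma\sim_{\bQ}0$ and $K_Z+\Delta_Z+f_*\Gamma\sim_{\bQ}0$, so on a common resolution the two pullbacks agree, the map is crepant for these log pairs, and kltness of $(Z,\Delta_Z+f_*\Gamma)$---hence of $(Z,\Delta_Z)$---follows immediately. This bypasses both the discrepancy-direction problem and any need to identify ample models of perturbed classes.
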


\begin{proof}
We know $f\colon X\dasharrow Z$ is a birational contraction; \textit{i.e.}\ ${\Ex}(f^{-1})$ does not contain any divisor, and $f_*(K_X+\Delta)=K_Z+\Delta_Z$ is antiample.

It follows from Theorem~\ref{thm-log fano pair} that there exists a $\bQ$-complement $\Gamma$ for $(X,\Delta)$ such that $(X,\Delta+\Gamma)$ is klt. Then $(Z,\Delta_Z+f_*\Gamma)$ is klt as the pullbacks of $K_Z+\Delta_Z+f_*\Gamma$ and $K_X+\Delta+\Gamma$ on a common resolution are equal. So $(Z,\Delta_Z)$ is klt. 
\end{proof}

\subsection{K-stability of the anticanonical model}
Let $(X,\Delta)$ be a projective log pair with big $-K_X-\Delta$. Let $(Z,\Delta_Z)$ be its anticanonical model; \textit{i.e.}\ $Z={\Proj}~R(X,-r(K_X+\Delta))$, and $\Delta_Z$ is the birational transform of $\Delta$ on $Z$.
Let $Y$ be a common resolution. 

    \begin{equation}\label{ef-resolvetest} 
        \xymatrix{
    &    Y\ar[dl]_{\mu}\ar[dr]^{\pi}  &\\
       {(X,\Delta)}  \ar@{..>}[rr]^{f}& & (Z,\Delta_Z).
        }  
    \end{equation}
Then 
$$\pi^*(K_Z+\Delta_Z)-\mu^*(K_X+\Delta)=B\ge 0$$

\begin{lem}Let $(X,\Delta)$ satisfy Assumption~\ref{assump-delta}.
Then for any prime divisor $E$ over $X$,
\[
A_{X,\Delta}(E)=A_{Z,\Delta_Z}(E)+\ord_E(B) \mbox{ and   }S_{X,\Delta}(E)=S_{Z,\Delta_Z}(E)+\ord_E(B) .
\]
\end{lem}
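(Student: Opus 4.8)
The plan is to prove both identities by comparing discrepancies and volumes on the common resolution $Y$ in the diagram \eqref{ef-resolvetest}, using the fact (from Corollary~\ref{cor-logfanomodel} and the definition of the anticanonical model) that $f\colon X \dashrightarrow Z$ is a birational contraction with $f_*(K_X+\Delta)=K_Z+\Delta_Z$ antiample, and the key relation $\pi^*(K_Z+\Delta_Z)-\mu^*(K_X+\Delta)=B\ge 0$. First I would treat the log discrepancy identity. Pick a resolution on which $E$ appears as a prime divisor; without loss of generality take $Y$ to be such a model (refine if necessary). Write $K_Y + \mu_*^{-1}\Delta = \mu^*(K_X+\Delta) + \sum a_i(X,\Delta)\, E_i$ and similarly $K_Y + \pi_*^{-1}\Delta_Z = \pi^*(K_Z+\Delta_Z) + \sum a_i(Z,\Delta_Z)\, E_i$, where $E = E_{i_0}$ for some index. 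Since $\mu_*^{-1}\Delta$ and $\pi_*^{-1}\Delta_Z$ agree (both equal the birational transform of $\Delta$, as $f$ is a birational contraction that does not extract divisors), subtracting the two expressions gives $\sum (a_i(X,\Delta) - a_i(Z,\Delta_Z))E_i = \pi^*(K_Z+\Delta_Z) - \mu^*(K_X+\Delta) = B$, hence $a_{i_0}(X,\Delta) - a_{i_0}(Z,\Delta_Z) = \ord_E(B)$; adding $1$ to both sides converts discrepancies to log discrepancies and yields $A_{X,\Delta}(E) = A_{Z,\Delta_Z}(E) + \ord_E(B)$.

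For the $S$-invariant identity I would use the volume formula defining $S$. By definition,
\[
S_{X,\Delta}(E) = \frac{1}{\vol(-K_X-\Delta)}\int_0^\infty \vol\bigl(-\mu^*(K_X+\Delta) - tE\bigr)\, dt,
\]
and similarly for $S_{Z,\Delta_Z}(E)$ with $\pi^*(K_Z+\Delta_Z)$ in place of $\mu^*(K_X+\Delta)$. The crucial point is that $f$ is a birational contraction to the anticanonical model, so the section rings match: $H^0(Y, -m\mu^*(K_X+\Delta)) = H^0(X,-m(K_X+\Delta)) = H^0(Z,-m(K_Z+\Delta_Z)) = H^0(Y,-m\pi^*(K_Z+\Delta_Z))$ for all sufficiently divisible $m$. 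Since $-\mu^*(K_X+\Delta)$ and $-\pi^*(K_Z+\Delta_Z)$ are big and differ by the effective divisor $B$ which is $\mu$-exceptional (equivalently $\pi$-contracted: $\mu(B)$ has no divisorial component by the birational-contraction hypothesis), the two divisors have the same volume and, more generally, $\vol(-\mu^*(K_X+\Delta) - tE) = \vol(-\pi^*(K_Z+\Delta_Z) - tE + B)$. The standard fact that adding a fixed effective divisor with no common component to the mobile part and shifting a linear subseries produces a translate in the integral — concretely, $\vol(-\mu^*(K_X+\Delta) - tE) = \vol(-\pi^*(K_Z+\Delta_Z) - (t - \ord_E(B))E)$ for $t \ge \ord_E(B)$, while for $t \le \ord_E(B)$ the divisor $-\mu^*(K_X+\Delta) - tE = -\pi^*(K_Z+\Delta_Z) + (B - tE)$ has the same volume as $-\pi^*(K_Z+\Delta_Z)$ because $B - tE \ge 0$ contributes only fixed part — gives, after the substitution $s = t - \ord_E(B)$,
\[
\int_0^\infty \vol\bigl(-\mu^*(K_X+\Delta) - tE\bigr)\, dt = \ord_E(B)\cdot\vol(-K_Z-\Delta_Z) + \int_0^\infty \vol\bigl(-\pi^*(K_Z+\Delta_Z) - sE\bigr)\, ds.
\]
Dividing by $\vol(-K_X-\Delta) = \vol(-K_Z-\Delta_Z)$ yields $S_{X,\Delta}(E) = \ord_E(B) + S_{Z,\Delta_Z}(E)$.

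The main obstacle I expect is the justification of the volume-translation identity, i.e. that $\vol(L - tE) = \vol((L+B) - tE)$ in the relevant range and that the contribution of $B$ is exactly a translation by $\ord_E(B)$: one must argue carefully that $B$ (and $B - tE$ for small $t$) is entirely in the stable base locus of the relevant linear systems, so that it can be stripped off without changing sections. This is where the hypothesis that $f$ is a birational contraction to the \emph{anticanonical} model is essential — it guarantees $B$ is $\mu$-exceptional over $Z$ and that $-\pi^*(K_Z+\Delta_Z)$ is semiample, pulled back from the ample class on $Z$, so $-\mu^*(K_X+\Delta) = -\pi^*(K_Z+\Delta_Z) + B$ is precisely the Zariski-type decomposition into its semiample (positive) part and fixed part $B$. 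Once this decomposition is in hand the computation of $S$ reduces to a clean change of variables; the discrepancy identity is then elementary bookkeeping on $Y$.
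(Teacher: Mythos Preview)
Your argument is correct in substance, and the discrepancy computation matches the paper's. For the $S$-invariant, the paper takes a slightly more direct route: rather than manipulating the volume integral, it records the linear system identity
\[
|\mu^*(-m(K_X+\Delta))|=|\pi^*(-m(K_Z+\Delta_Z))|+mB
\]
and reads off immediately that $S_{X,\Delta,m}(E)=S_{Z,\Delta_Z,m}(E)+\ord_E(B)$, then passes to the limit $m\to\infty$ via Theorem~\ref{t-Sm and S}\eqref{thm-1}. This is the same mechanism underlying your volume-translation argument (your justification of the ``main obstacle'' ultimately rests on exactly this fixed-part decomposition), but the $S_m$ packaging avoids the case analysis in $t$ and the analytic bookkeeping you flagged.

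One terminological correction: $B$ is $\pi$-exceptional, not $\mu$-exceptional. The computation is $\pi_*B=(K_Z+\Delta_Z)-f_*(K_X+\Delta)=0$, whereas $\mu_*B$ need not vanish (take $f$ itself to be a divisorial contraction, so $Y=X$, $\mu=\mathrm{id}$, and $B$ is the nonzero discrepancy divisor on $X$). Fortunately $\pi$-exceptionality is precisely what your argument needs, since you are adding $B$ to the pullback $-\pi^*(K_Z+\Delta_Z)$ of an ample class; once $B$ is $\pi$-exceptional, the equality of sections $H^0(Y,mL+mB)=H^0(Y,mL)$ for $L=-\pi^*(K_Z+\Delta_Z)$ is immediate and your volume-shift identity follows.
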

\begin{proof}
For the log discrepancy function, this follows directly from the definition. Since 
$$|\mu^*(-m(K_X+\Delta))|=|\pi^*(-m(K_Z+\Delta_Z))|+mB,$$
we have $S_{X,\Delta,m}(E)=S_{Z,\Delta_Z,m}(E)+\ord_E(B)$. Therefore, the same is true for the $S$-function.
\end{proof}

\begin{lem}\label{lem-compareZandB}
If\, $(Z,\Delta_Z)$ is klt, there exists a $t>0$ depending on $Z$ $($but not $E)$ such that for any divisor $E$ over $X$
\[
A_{Z,\Delta_Z}(E)\ge t \cdot \ord_E(B). 
\]
\end{lem}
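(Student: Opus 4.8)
The plan is to carry the klt condition on $(Z,\Delta_Z)$ over to the common resolution $Y$, perturb the crepant pullback there by a small multiple of $B$, and then read the inequality off the fact that a log smooth pair with all boundary coefficients $<1$ has strictly positive log discrepancies.

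First I would pass to a convenient model. Since $\ord_E(B)$ and $A_{Z,\Delta_Z}(E)$ depend only on the valuation $E$, I may replace $Y$ by any higher birational model without affecting the statement: under $g\colon Y'\to Y$ the divisor $B$ is replaced by $g^*B$ and the pullbacks $\pi^*(K_Z+\Delta_Z)$, $\mu^*(K_X+\Delta)$ behave functorially. So I may assume $Y$ is a log resolution of $(Z,\Delta_Z)$ on which, in addition, $\Supp(B)$ is contained in a simple normal crossing divisor. Write $K_Y+\Delta_Y=\pi^*(K_Z+\Delta_Z)$. Because $(Z,\Delta_Z)$ is klt, $\Delta_Y$ is supported on a simple normal crossing divisor and all of its coefficients are $<1$; set $\eta:=1-\max_i\operatorname{coeff}_{E_i}(\Delta_Y)>0$. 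Recall also the standard fact that, $(Y,\Delta_Y)$ being crepant to $(Z,\Delta_Z)$, one has $A_{Z,\Delta_Z}(E)=A_{(Y,\Delta_Y)}(E)$ for every divisor $E$ over $X$ (the function fields of $X$ and $Z$ agree, so ``over $X$'' and ``over $Z$'' mean the same thing).

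Next, since $B\ge 0$ is a $\bQ$-divisor with only finitely many components, I fix $\varepsilon>0$ small enough that $\varepsilon\cdot\max_i\operatorname{coeff}_{E_i}(B)<\eta$; note $\varepsilon$ depends only on $(X,\Delta)$ and $Z$, not on $E$. Then $\Delta_Y+\varepsilon B$ is a $\bQ$-divisor supported on a simple normal crossing divisor with all coefficients in $(-\infty,1)$, so $(Y,\Delta_Y+\varepsilon B)$ is sub-klt; in particular $A_{(Y,\Delta_Y+\varepsilon B)}(E)>0$ for every divisor $E$ over $Y$.

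Finally, additivity of the log discrepancy in the boundary gives, for any prime divisor $E$ over $X$,
\[
A_{Z,\Delta_Z}(E)=A_{(Y,\Delta_Y)}(E)=A_{(Y,\Delta_Y+\varepsilon B)}(E)+\varepsilon\cdot\ord_E(B)>\varepsilon\cdot\ord_E(B)\ge 0,
\]
which is the claim with $t=\varepsilon$. The one point needing care is the preliminary reduction: the implication ``all coefficients $<1$ $\Rightarrow$ sub-klt'' holds only for log smooth pairs, so one must genuinely pass to a model on which $\Delta_Y+B$ is simple normal crossing before perturbing. Granting that, the argument is formal, and the independence of $t$ from $E$ is transparent, since $\varepsilon$ is constrained only through the finitely many coefficients of the fixed divisor $\Delta_Y+\varepsilon B$ on $Y$.
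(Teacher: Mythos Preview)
Your proof is correct and follows essentially the same idea as the paper's: write $\pi^*(K_Z+\Delta_Z)=K_Y+\Delta_Y$ and use that $(Y,\Delta_Y+tB)$ is sub-lc (you get sub-klt) for some small $t>0$, which immediately yields $A_{Z,\Delta_Z}(E)=A_{(Y,\Delta_Y)}(E)\ge t\cdot\ord_E(B)$. The paper states the existence of such $t$ without justification, whereas you supply the details by passing to a log resolution on which $\Supp(\Delta_Y+B)$ is simple normal crossing and choosing $\varepsilon$ explicitly from the coefficients; this extra care is appropriate but does not change the strategy.
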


\begin{proof}
Since $(Z,\Delta_Z)$ is klt, we know that there exists a $t>0$ such that if we write $\pi^*(K_Z+\Delta_Z)=K_Y+\Delta_1$, then $(K_Y+\Delta_1+ t B)$ is sub-lc for some $t>0$; \textit{i.e.}\ for any $E$,
\begin{equation*}\pushQED{\qed}
  A_{Z,\Delta_Z}(E)\ge t \cdot \ord_E(B).\qedhere \popQED
	\end{equation*}
\renewcommand{\qed}{}     
\end{proof}

\begin{proof}[Proof of Theorem~\ref{t-maintheorem}]
Since 
\[
\delta(X,\Delta)=\inf_E\frac{A_{Z,\Delta_Z}(E)+\ord_E(B)}{S_{Z,\Delta_Z}(E)+\ord_E(B)},
\]
it is clear that $\delta(X,\Delta)\ge 1$ if and only if $A_{Z,\Delta_Z}(E)\ge S_{Z,\Delta_Z}(E)$, \textit{i.e.}\ $(Z,\Delta)$ is klt and $\delta(Z,\Delta_Z)\ge 1$. Moreover, 
$$A_{X,\Delta}(E)=A_{Z,\Delta_Z}(E)+\ord_E(B)>S_{Z,\Delta_Z}(E)+\ord_E(B)=S_{X,\Delta}(E)$$ if and only if $A_{Z,\Delta_Z}(E)>S_{Z,\Delta_Z}(E)$.

Assume $\delta(X,\Delta)>1$; then $\delta(Z,\Delta_Z)\ge \delta(X,\Delta)$. Conversely, if $\delta(Z,\Delta_Z)> 1$, an easy calculation shows that
$$\delta(X,\Delta)\ge \frac{\delta(Z,\Delta_Z)(t+1)}{\delta(Z,\Delta_Z)+t}>1,$$
where $t$ is the constant from Lemma~\ref{lem-compareZandB}.
\end{proof}

\begin{expl}\label{example-non finitely generated}
This example has appeared in several works to present pathological phenomena, see \textit{e.g.} \cite{Gongyo-weakfano}: Let $S$ be the  blowup of $\mathbb{P}^2$ at nine very general points. Then $-K_S$ is known to be nef but not semiample. In fact, there will be a unique cubic curve passing through these nine points, and if we denote by $E$ its birational transform on $S$, then for any $m\in \bN$, $|-mK_S|$ has one element $mE$. 

Let $H$ be an ample Cartier divisor on $S$ and $X=\mathbb{P}_S(E)$, where $E:=\mathcal{O}_S+\mathcal{O}_S(H)$. Denote by $\pi\colon X\to S$ the natural morphism. We claim $-K_X$ is big. In fact, since
$$
\omega_{X/S}=\wedge^2\mathcal{O}_{\mathbb{P}(E)}(-2),
$$
we have
\begin{align*}
H^0(\cO_X(-mK_X)) &= H^0(S,\pi_*(\mathcal{O}_X(-mK_X)))\\
&= H^0(S, {\Sym}^{2m}(E)\otimes(\wedge^2E)^{\otimes -m}\otimes \omega^{\otimes m}_S)\\
&= H^0\left(S,  \left(\bigoplus^{2m}_{i=0}  \mathcal{O_S}(iH)\right) \otimes \mathcal{O}_S(-mH-mK_S) \right)\\
      &= H^0\left(S, \bigoplus^m_{i=0}\mathcal{O}_S(iH-mK_S) \right),
\end{align*}
and since $-K_S\sim E$ is nef, we have
\begin{align*}
\vol_X(-K_X) &= 6\int^1_{0}\frac{1}{2}(tH-K_S)^2=3\int^1_{0}(t^2H^2-2tH(-K_S))\\
                 &= H^2+3H\cdot (-K_S)>0.
\end{align*} 
 However, the algebra $\bigoplus_{i\le m}H^0(iH-mK_S)$ is not finitely generated,
since
$$\sum_{1\le j\le m-1}H^0(\cO_S(-jK_S))\otimes H^0(\cO_S(H-(m-j)K_S))\lra H^0(\cO_S(H-mK_S))$$ is not surjective for any $m$. Thus we need generators from $H^0(\cO_S(H-mK_S))$ for every $m$.

By Theorem~\ref{t-main}, we know $\delta(X)<1$. Here we give a direct verification of this. We denote by $Y\subseteq X$ the section given by
$$E= \mathcal{O}_S\oplus  \mathcal{O}_S(H)\lra \mathcal{O}_S.$$
Then similarly to before, we have
\[
H^0(\cO_X(-mK_X-m_0Y)) =  H^0\left(S,  \left(\bigoplus^{2m}_{i=m_0}  \mathcal{O_S}(iH)\right) \otimes \mathcal{O}_S(-mH-mK_S) \right),
\]
where we follow the convention that if $m_0> 2m$, then the direct sum is 0. Hence a direct calculation implies
$$
\vol(-K_X-tY)\ =
\begin{cases}
H^2+3H\cdot (-K_S) \hspace{40mm}\mbox{if } t\le 1,\\
(2-t)((t^2-t+1)H^2+3t H\cdot (-K_S)) \hspace{4mm}\mbox{if } 1\le t\le 2.
\end{cases}
$$
By an elementary calculation, 
\[
S_X(Y)=\frac{\frac{7}{4}H^2+5H\cdot(-K_S)}{H^2+3H\cdot(-K_S)}> \frac{5}{3}>1=A_X(Y),
\]
which implies $\delta(X)<\frac{3}{5}$.

\end{expl}



\begin{bibdiv}
\begin{biblist}[\resetbiblist{BCH\etalchar{+}10+++}\normalsize]

\bib{BCHM}{article}{
      author={Birkar, Caucher},
      author={Cascini, Paolo},
      author={Hacon, Christopher D.},
      author={McKernan, James},
       title={Existence of minimal models for varieties of log general type},
        date={2010},
     journal={J.~Amer.\ Math.\ Soc.},
      volume={23},
      number={2},
      pages={405\ndash 468},
      label={BCH\etalchar{+}10},
}

\bib{BJ-delta}{article}{
      author={Blum, Harold},
      author={Jonsson, Mattias},
       title={Thresholds, valuations, and {K}-stability},
        date={2020},
     journal={Adv.\ Math.},
      volume={365},
       pages={107062},
}

\bib{BX-uniqueness}{article}{
      author={Blum, Harold},
      author={Xu, Chenyang},
       title={Uniqueness of {K}-polystable degenerations of {F}ano varieties},
        date={2019},
     journal={Ann.\ of Math.~(2)},
      volume={190},
      number={2},
       pages={609\ndash 656},
}

\bib{DZ-bigness}{arXiv}{
      author={Darvas, Tam\'as},
      author={Zhang, Kewei},
      title={Twisted {K}\"ahler-{E}instein metrics in big classes},
      date={2022},
       eprint={preprint \arXiv{2208.08324}},
}

\bib{DR-bigness}{arXiv}{
      author={Dervan, Ruadha\'{\i}},
      author={Reboulet, R\'emi},
       title={Ding stability and {K}\"ahler-{E}instein metrics on manifolds
  with big anticanonical class},
        date={2022},
     eprint={preprint \arXiv{2209.08952}},
}

\bib{Fujita-valuative}{article}{
      author={Fujita, Kento},
       title={A valuative criterion for uniform {K}-stability of\, {$\mathbb
  Q$}-{F}ano varieties},
        date={2019},
     journal={J.~reine angew.\ Math.},
      volume={751},
       pages={309\ndash 338},
}

\bib{FO-basistype}{article}{
      author={Fujita, Kento},
      author={Odaka, Yuji},
       title={On the {K}-stability of {F}ano varieties and anticanonical
  divisors},
        date={2018},
     journal={Tohoku Math.\ J.~(2)},
      volume={70},
      number={4},
       pages={511\ndash 521},
}

\bib{Gongyo-weakfano}{article}{
      author={Gongyo, Yoshinori},
       title={On weak {F}ano varieties with log canonical singularities},
        date={2012},
     journal={J.~reine angew.\ Math.},
      volume={665},
       pages={237\ndash 252},
}

\bib{Kol13}{book}{
      author={Koll\'{a}r, J\'{a}nos},
       title={Singularities of the minimal model program \rm{(with a collaboration of S.~Kov\'{a}cs)}},
      series={Cambridge Tracts in Math.},
   publisher={Cambridge Univ.\ Press, Cambridge},
        date={2013},
      volume={200},
}

\bib{KM98}{book}{
      author={Koll\'{a}r, J\'{a}nos},
      author={Mori, Shigefumi},
       title={Birational geometry of algebraic varieties \rm{(with the collaboration of C.\,H.~Clemens and A.~Corti; translated from the 1998 Japanese original)}},
      series={Cambridge Tracts in Math.},
   publisher={Cambridge Univ.\ Press, Cambridge},
        date={1998},
      volume={134},
}

\bib{Lazasfeldbook2}{book}{
      author={Lazarsfeld, Robert},
       title={Positivity in algebraic geometry. {II}},
      series={Ergeb.\ Math.\ Grenzbeg.~(3)},
   publisher={Springer-Verlag, Berlin},
        date={2004},
      volume={49},
        ISBN={3-540-22534-X},
}

\bib{Li-valuative}{article}{
      author={Li, Chi},
       title={K-semistability is equivariant volume minimization},
        date={2017},
     journal={Duke Math.~J.},
      volume={166},
      number={16},
       pages={3147\ndash 3218},
}

\bib{LXZ-HRFG}{article}{
      author={Liu, Yuchen},
      author={Xu, Chenyang},
      author={Zhuang, Ziquan},
       title={Finite generation for valuations computing stability thresholds
  and applications to {K}-stability},
        date={2022},
     journal={Ann.\ of Math.~(2)},
      volume={196},
      number={2},
       pages={507\ndash 566},
}

\bib{Xu-survey}{article}{
      author={Xu, Chenyang},
       title={K-stability of {F}ano varieties: an algebro-geometric approach},
        date={2021},
     journal={EMS Surv.\ Math.\ Sci.},
      volume={8},
      number={1-2},
       pages={265\ndash 354},
}

\bib{XZ-localHRFG}{arXiv}{
      author={Xu, Chenyang},
      author={Zhuang, Ziquan},
       title={Stable degenerations of singularities},
        date={2022},
  eprint={preprint \arXiv{2205.10915}},
}

\end{biblist}
\end{bibdiv}

\end{document}